\newtheorem{theorem}{Theorem}[section]
\newtheorem{obs}[theorem]{Observation}
\newtheorem{rem}[theorem]{Remark}
\newtheorem{lem}[theorem]{Lemma}
\newtheorem*{urem}{Remark}
\newtheorem*{urems}{Remarks}
\newtheorem{propos}[theorem]{Proposition}
\newtheorem*{udefin}{Defininiton}
\newtheorem{cor}[theorem]{Corollary}
\begin{document}

\title{Anti-Ramsey numbers of graphs with small connected components}

\author{Shoni Gilboa\thanks{Mathematics Dept., The Open University of Israel, Raanana 43107, Israel. \texttt{tipshoni@gmail.com} Tel: 972-77-7081316}
\and
Yehuda Roditty\thanks{Schools of Computer Sciences, The Academic College of Tel-Aviv-Yaffo, and Tel-Aviv University, Tel-Aviv 69978, Israel. \texttt{jr@mta.ac.il} 
      } 
}

\maketitle

\begin{abstract} The anti-Ramsey number, $AR(n,G)$, for a graph $G$ and an integer $n\geq|V(G)|$, is defined to be the minimal integer $r$ such that in any edge-colouring of $K_n$ by at least $r$ colours there is a multicoloured copy of $G$, namely, a copy of $G$ that each of its edges has a distinct colour. In this paper we determine, for large enough $n$, $AR(n,L\cup tP_2)$ and $AR(n,L\cup kP_3)$ for any large enough $t$ and $k$, and a graph $L$ satisfying some conditions.  
Consequently, we determine $AR(n,G)$, for large enough $n$, where $G$ is $P_3\cup tP_2$ for any $t\geq 3$, $P_4\cup tP_2$ and $C_3\cup tP_2$ for any $t\geq 2$, $kP_3$ for any $k\geq 3$, $tP_2\cup kP_3$ for any $t\geq 1$, $k\geq 2$, and $P_{t+1}\cup kP_3$ for any $t\geq 3$, $k\geq 1$. 
Furthermore, we obtain upper and lower bounds for $AR(n,G)$,  for large enough $n$, where $G$ is $P_{k+1}\cup tP_2$ and $C_k\cup tP_2$ for any $k\geq 4$, $t\geq 1$. 
\end{abstract}

\begin{quote}
\textbf{Keywords:} Anti-Ramsey, Multicoloured, Rainbow. 
\end{quote}

\section{Introduction}\label{intro}

\begin{udefin} A subgraph of an edge-coloured graph is called {\bf multicoloured} if each of its edges has a distinct colour.

Let $G$ be a (simple) graph. For any integer $n\geq|V(G)|$, let $AR(n,G)$ be the minimal integer $r$ such that in any edge-colouring of $K_n$ by at least $r$ colours there is a multicoloured copy of $G$.
\end{udefin}
$AR(n,G)$ was determined for various graphs $G$. We mention some of the results, which are relevant to our work.

For $P_{k+1}$, a path of length $k\geq 2$, Simonovits and S\'os showed (\cite{SS}) that for large enough $n$ ($n\geq\frac{5}{4}k+c$ for some universal constant $c$),
\begin{equation}\label{eq:P_k}AR(n,P_{k+1})=(\lfloor k/2\rfloor-1)\left(n-\frac{\lfloor k/2\rfloor}{2}\right)+2+k\bmod 2.\end{equation}

For $C_k$, a cycle of length $k$, Montellano-Ballesteros and Neumann-Lara (\cite{MbNl}) proved that for any $n\geq k\geq 3$,
\begin{equation}\label{eq:C_k}AR(n,C_k)=\binom{k-1}{2}\left\lfloor\frac{n}{k-1}\right\rfloor+\left\lceil\frac{n}{k-1}\right\rceil+\binom{n\bmod(k-1)}{2},\end{equation}
after Erd\H os, Simonovits and S\'os noted in \cite{ESS}, where anti-Ramsey numbers were first introduced, that \eqref{eq:C_k} holds for $n\geq k=3$, showed the lower bound in \eqref{eq:C_k} for any $n\geq k\geq 3$ and conjectured this lower bound to be always tight, and Alon proved (\cite{A}) that \eqref{eq:C_k} holds for $n\geq k=4$.

For $tP_2$, the disjoint union of $t$ paths of length $1$, i.e., a matching of size $t$, Schiermeyer first showed (\cite{S}) that $AR(n,tP_2)=(t-2)\left(n-\frac{t-1}{2}\right)+2$
for $t\geq 2$, $n\geq 3t+3$.
Then Fujita, Kaneko, Schiermeyer and Suzuki proved (\cite{FKSS}) that for any $t\geq 2$, $n\geq 2t+1$,
\begin{equation}\label{eq:tP_2}AR(n,tP_2)=\begin{cases}(t-2)(2t-3)+2&n\leq\frac{5t-7}{2}\\(t-2)\left(n-\frac{t-1}{2}\right)+2&n\geq\frac{5t-7}{2}\,.\end{cases}\end{equation}
Finally, the remaining case $n=2t$ was settled by Haas and Young (\cite{HY}) who confirmed the conjecture made in \cite{FKSS}, that 
\begin{equation}\label{eq:perfect}AR(2t,tP_2)=\begin{cases}(t-2)\frac{3t+1}{2}+2&3\leq t\leq 6\\(t-2)(2t-3)+3&t\geq 7\,.\end{cases}\end{equation}
\smallskip

In Section \ref{sec:tP2} we prove the following theorem which enables to transfer any linear upper bound on $AR(n,L\cup t_1P_2)$ (for large enough $n$) to a linear upper bound on $AR(n, L\cup tP_2)$ (for large enough $n$) for any $t>t_1$.
\medskip

\noindent{\bf Theorem \ref{thm:tP2}.} {\it  Let $L$ be a graph, let $t_1\geq 0$ and $n_0\geq |V(L)|+2t_1$ be integers, and let $r$ and $s$ be real numbers. Suppose that $AR(n,L\cup t_1P_2)\leq (t_1+r)\left(n-\frac{t_1+r+1}{2}\right)+s+1$ for any integer $n\geq n_0$.

Then, there is a constant $\gamma_2$, depending only on $L$, $t_1$, $r$, $s$ and $n_0$, such that for any integers $t\geq t_1$ and $n>\frac{5}{2}t+\gamma_2$,} 
\begin{equation*}\label{eq:LtP2}AR(n,L\cup tP_2)\leq (t+r)\left(n-\frac{t+r+1}{2}\right)+s+1.\end{equation*} 
For $L$ satisfying some additional restrictions we show, in Proposition \ref{prop:tP2tight}, that the upper bound of Theorem \ref{thm:tP2} is actually tight.
Using Proposition \ref{prop:tP2tight} we then easily get that for large enough $n$,
\begin{align*}
AR(n,P_3\cup tP_2)=(t-1)\left(n-\frac{t}{2}\right)&+2\quad(t\geq 2,\text{Corollary }\ref{cor:PktP2}),\\
AR(n,P_4\cup tP_2)=t\left(n-\frac{t+1}{2}\right)&+2\quad (t\geq 1,\text{Corollary }\ref{cor:PktP2}),\\
AR(n,C_3\cup tP_2)=t\left(n-\frac{t+1}{2}\right)&+2\quad (t\geq 1,\text{Corollary }\ref{cor:C3tP2}).
\end{align*}
We also provide upper and lower bounds for $AR(n,P_{k+1}\cup tP_2)$ and $AR(n, C_k\cup tP_2)$ for any $k\geq 4$ and $t\geq 1$.

In Section \ref{sec:tP3}, we prove the following Theorem, analogous to Theorem \ref{thm:tP2},
which enables to transfer any linear upper bound on $AR(n,L\cup k_1P_3)$ (for large enough $n$) to a linear upper bound on $AR(n, L\cup kP_3)$ (for large enough $n$) for any $k>k_1$.
\medskip

\noindent{\bf Theorem \ref{thm:LtP3}.} {\it Let $L$ be a graph, let $k_1\geq 0$ and $n_0\geq |V(L)|+3k_1$ be integers, and let $r$ and $s$ be real numbers. Suppose that $AR(n,L\cup k_1P_2)\leq (k_1+r)\left(n-\frac{k_1+r+1}{2}\right)+s+1$ for any integer $n\geq n_0$.

Then, there is a constant $\gamma_3$, depending only on $L$, $k_1$, $r$, $s$ and $n_0$, such that for any integers $k\geq k_1$ and $n>5k+\gamma_3$,}
\begin{equation*}AR(n,L\cup kP_3)\leq (k+r)\left(n-\frac{k+r+1}{2}\right)+s+1.\end{equation*} 
\newline 
This theorem enables us to show that for large enough $n$,
\begin{align*}
AR(n,kP_3)&=(k-1)\left(n-\frac{k}{2}\right)+2&(k\geq 1,\text{Corollary }\ref{cor:tP3}),\\
AR(n,P_{t+1}\cup kP_3)&=(k+\lfloor t/2\rfloor-1)\left(n-\frac{k+\lfloor t/2\rfloor}{2}\right)+2+t\bmod 2&(t\geq 3,k\geq 0,\text{Corollary }\ref{cor:PtkP3}),\\
AR(n,P_2\cup kP_3)&=(k-1)\left(n-\frac{k}{2}\right)+3&(k\geq 1,\text{Corollary }\ref{cor:P2kP3}),\\
AR(n,tP_2\cup kP_3)&=(k+t-2)\left(n-\frac{k+t-1}{2}\right)+2&(t\geq 2,k\geq 2,\text{Corollary }\ref{cor:tP2kP3}).\\
\end{align*}

To get some of the consequences, mentioned above,  of Theorems \ref{thm:tP2} and \ref{thm:LtP3}, we use upper bounds on the anti-Ramsey numbers of some small graphs. Those upper bounds are taken from  \cite{BGR}, where a complete account of the anti-Ramsey numbers of graphs with no more than four edges is given.

\section{Notation}

\begin{itemize}
\item Let $G=(V,E)$ be a (simple) graph.
\begin{enumerate}
\item For any (not necessarily disjoint) sets $A,B\subseteq V$ let $E_G(A,B):=\{uv\in E \mid u\in A,v\in B\}$. 
\item For each $v \in V$ let $N_G(v):= \{ w \in V \mid vw \in E \}$, and $d_G(v):=|N_G(v)|$.
\end{enumerate} 
\item The complete graph on a vertex set $V$ will be denoted $K^V$.
\item Let $c$ be an edge-colouring of a graph $G=(V,E)$.
\begin{enumerate}
\item We denote by $c(uv)$ the colour an edge $uv$ has. 
\item For any $v\in V$ let $C(v):=\{c(vw) \mid w\in N_G(v)\}$ and $d_c(v):=|C(v)|$.
\end{enumerate}
\end{itemize}

\section{The anti-Ramsey numbers of $L\cup tP_2$}\label{sec:tP2}

\begin{theorem}\label{thm:tP2} Let $L$ be a graph, let $t_1\geq 0$ and $n_0\geq |V(L)|+2t_1$ be integers, and let $r$ and $s$ be real numbers. Suppose that $AR(n,L\cup t_1P_2)\leq (t_1+r)\left(n-\frac{t_1+r+1}{2}\right)+s+1$ for any integer $n\geq n_0$.

Then, there is a constant $\gamma_2$, depending only on $L$, $t_1$, $r$, $s$ and $n_0$, such that for any integers $t\geq t_1$ and $n>\frac{5}{2}t+\gamma_2$, 
\begin{equation*}\label{eq:LtP2}AR(n,L\cup tP_2)\leq (t+r)\left(n-\frac{t+r+1}{2}\right)+s+1.\end{equation*} 
\end{theorem}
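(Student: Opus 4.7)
The approach is induction on $t$, with base case $t=t_1$ given directly by the hypothesis. For the inductive step, assume the inequality for $t \geq t_1$ and take an edge-colouring $c$ of $K_n$ using at least $f(t+1, n) := (t+r+1)\left(n - \frac{t+r+2}{2}\right) + s + 1$ colours, where $n > \frac{5}{2}(t+1) + \gamma_2$. The plan is to find an edge $e_0 = uv$ of colour $c_0$ such that, after deleting $\{u,v\}$ and the colour class of $c_0$, the induced colouring of $K^{V\setminus\{u,v\}}$ still uses at least $f(t, n-2)$ colours, none equal to $c_0$. With $\gamma_2$ chosen large enough to ensure $n-2 > \frac{5}{2}t+\gamma_2$ and $n-2 \geq n_0$, the inductive hypothesis then produces a rainbow $L\cup tP_2$ inside $K^{V\setminus\{u,v\}}$ avoiding $c_0$, and appending $e_0$ yields the desired rainbow $L\cup(t+1)P_2$.

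A short direct computation yields
$$f(t+1,n) - f(t,n-2) = n + t + r - 1,$$
so the step reduces to producing an edge $uv$ with $|C(u)\cup C(v)| \leq n + t + r - 2$, since every colour lost when the pair $\{u,v\}$ is removed must have all its edges incident to $\{u,v\}$, hence must lie in $C(u)\cup C(v)$ (the extra $-1$ accommodates $c_0$).

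To find such an edge I would first examine the pair $(v_1,v_2)$ of vertices minimising $d_c$. If $d_c(v_1)+d_c(v_2)\leq n+t+r-2$, the edge $v_1v_2$ of $K_n$ satisfies $|C(v_1)\cup C(v_2)|\leq d_c(v_1)+d_c(v_2)$ and we are done. Otherwise every vertex except possibly $v_1$ carries $d_c > \tfrac{1}{2}(n+t+r-2)$; the colouring is then very colour-rich throughout and the plan is to build the rainbow $L\cup(t+1)P_2$ directly by first locating a rainbow $L\cup t_1P_2$ on a moderate-size subset via the base hypothesis, then greedily appending $t+1-t_1$ further vertex-disjoint rainbow edges, which succeeds because each yet-unused vertex retains many colours not already spent.

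The principal obstacle is this colour-rich case: one must verify carefully that the greedy extension actually goes through, keeping track at each step of the vertices already used and the colours already spent, and ensuring that abundance of $d_c$ at individual vertices translates into abundance of colours globally unused. The lower bound $n > \frac{5}{2}t + \gamma_2$ is precisely calibrated to support this extension, and the constant $\gamma_2$ must be chosen large enough to absorb the dependence on $|V(L)|,t_1,r,s,n_0$ and to maintain both $n-2 > \frac{5}{2}t+\gamma_2$ and $n-2 \geq n_0$, so the inductive recursion descends cleanly to the base.
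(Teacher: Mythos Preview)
Your inductive framework and the handling of the ``low colour-degree pair'' case are essentially sound (modulo the standard recolouring trick to ensure the inner rainbow copy avoids the colour $c_0$). The genuine gap is in your colour-rich case, and it is not a matter of missing details: the greedy extension you propose cannot achieve the threshold $n>\tfrac{5}{2}t+\gamma_2$ stated in the theorem.

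Concretely, suppose every vertex but one has $d_c(v)>\tfrac12(n+t+r-2)$. After locating a rainbow $L\cup t_1P_2$ and greedily appending $j$ further disjoint edges, you have used $\ell+2t_1+2j$ vertices and $|E(L)|+t_1+j$ colours. To append the next edge from an unused vertex $w$ you must avoid all of these among the $d_c(w)$ colour-representatives at $w$; the final step, $j=t-t_1$, therefore requires
\[
\tfrac12(n+t+r-2)\;>\;\ell+|E(L)|+3t,
\]
i.e.\ $n>5t+2\ell+2|E(L)|-r+2$. So your argument only goes through for $n>5t+O_L(1)$. At the stated threshold $n\approx\tfrac{5}{2}t$ one has $\tfrac12(n+t+r-2)\approx\tfrac74 t$, which is well below $3t$, and the greedy extension stalls. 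Since $\gamma_2$ is required to be independent of $t$, this is a real failure, not something absorbable into the constant.

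The paper circumvents this by a different dichotomy and a non-greedy argument in the hard case. It removes a \emph{single} vertex (using the identity $f(t,n)-f(t-1,n-1)=n-1$, so the induction always has enough colours), and splits at the much lower threshold $d_c(v_0)\ge 2t+\ell$ versus $d_c(v)\le 2t+\ell-1$ for all $v$. In the high-degree case one recurses to $t-1$ on $n-1$ vertices and extends by one edge from $v_0$, needing only $2t+\ell>2(t-1)+\ell+1$. In the all-low-degree case, which is the heart of the proof, the paper does \emph{not} extend greedily: it fixes a rainbow $L\cup(t-1)P_2$, builds an auxiliary multicoloured graph $H$ with $\Delta(H)\le 2t+\ell-1$, classifies the matching vertices as ``fat'' (at least two $H$-neighbours outside) or ``thin'', shows each matching edge has at most one fat endpoint and that certain sets are independent in $H$, and from these structural constraints bounds $|E(H)|$ by roughly $(2t+\ell-1)(t+\ell-1)$. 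Comparing with $f(t,n)$ forces $n\le\tfrac52 t+O_L(1)$, which is exactly where the $\tfrac52 t$ in the statement comes from. Your proposal is missing an argument of this type for the regime where colour-degrees sit between roughly $\tfrac74 t$ and $2t+\ell-1$.
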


\begin{proof} 
The proof is by induction on $t$. The base case, $t=t_1$, is obvious (provided $\frac{5}{2}t_1+\gamma_2\geq n_0-1$). 

Now let $t>t_1$, and assume that $AR(n,L\cup (t-1)P_2)\leq (t-1+r)\left(n-\frac{t-1+r+1}{2}\right)+s+1$ for any $n>\frac{5}{2}(t-1)+\gamma_2$. Let $c$ be any edge-coloring of $K^V$, where $|V|=n>\frac{5}{2}t+\gamma_2$, by at least $(t+r)\left(n-\frac{t+r+1}{2}\right)+s+1$ colours. We will find a multicoloured copy of $L\cup tP_2$ in $K^V$. Let $\ell:=|V(L)|$. The proof is divided into two cases.
\medskip\newline {\bf Case 1.} $d_c(v_0)\geq 2t+\ell$ for some vertex $v_0$.
\medskip\newline 
Changing the colour of every edge $e$ of $K^{V-\{v_0\}}$ for which $c(e)\in C(v_0)$ (if there are any such edges) to some common colour $c_0$, we get an edge-colouring $c^*$ of $K^{V-\{v_0\}}$ by at least 
$$(t+r)\left(n-\frac{t+r+1}{2}\right)+s+1-(n-1)=(t-1+r)\left(n-1-\frac{t-1+r+1}{2}\right)+s+1$$
colours. Since clearly $n-1>\frac{5}{2}t+\gamma_2-1>\frac{5}{2}(t-1)+\gamma_2$, we get by the induction hypothesis that $K^{V-\{v_0\}}$ contains a copy $G$ of $L\cup (t-1)P_2$ which is multicoloured with respect to $c^*$, and therefore also according to the original colouring $c$.

The vertex $v_0$ is the endpoint of at least $2t+\ell$ edges with distinct colours (with respect to $c$). The other endpoint of at most $2(t-1)+\ell$ of those edges is a vertex of $G$. Also, at most one of those edges have the same colour, according to $c$, as an edge of $G$ (since at most one of the edges of $G$ is coloured by the colour $c_0$ according to $c^*$, i.e., by a colour in $C(v_0)$ with respect to $c$). Therefore we are surely left with at least one edge $v_0w$ such that $w\notin V(G)$ and $c(v_0w)\notin\{c(e)\mid e\in E(G)\}$. By adding such an edge to $G$ we get the desired muticoloured copy of $L\cup tP_2$.  
\medskip\newline {\bf Case 2.} $d_c(v)\leq 2t+\ell-1$ for all $v\in V$.
\medskip\newline
By the induction hypothesis $K^V$ clearly contains a multicoloured copy, $G$, of $L\cup (t-1)P_2$. Assume, by contradiction, that $K^V$ does not contain a multicoloured copy of $L\cup tP_2$.

Form a graph $H$ on the vertex set $V$ by adding to the edges of $G$ a single edge of each colour of $c$ not represented in $G$. By our assumptions, $d_H(v)\leq 2t+\ell-1$ for any $v\in V$, and $H$ does not contain a copy of $L\cup tP_2$.  

Let $U_L$ be the vertex set of the $L$ part of $G$, $U$ the vertex set of the $(t-1)P_2$ part of $G$, and let $W:=V- (U_L\cup U)$. 
Call a vertex $u\in U$ {\bf fat} if $\lvert E_H(\{u\},W)\rvert\geq 2$, and {\bf thin} otherwise.

If $uv$ is an edge of $G$, and $u\in U$ is fat, then $E_H(\{v\},W)=\emptyset$. (Otherwise, we could get from $G$ a copy of $L\cup tP_2$ in $H$ by replacing the edge $uv$ by two edges, one connecting $v$ to some $w\in N_H(v)\cap W$, and the other connecting $u$ to some vertex, different than $w$, in $N_H(u)\cap W$). In particular, any edge of $G$ has at most one fat endpoint.

Let $F\subseteq U$ be the set of fat vertices, $N\subseteq U$ the set of thin vertices such that their (only) neighbour in $G$ is fat, and $T\subseteq U$ the set of all other thin vertices in $U$. Notice that 
\begin{equation}\label{FTN}|F|+\frac{|T|}{2}=|N|+\frac{|T|}{2}=t-1.\end{equation}

The set $N$ is an independent set in $H$. (Otherwise, if there were vertices $u_1, u_2$ in $N$ adjacant in $H$, we could get from $G$ a copy of $L\cup tP_2$ in $H$ by replacing the two edges of $G$ containing $u_1, u_2$ by the edge $u_1u_2$ and for $i=1,2$, an edge between the fat neighbour of $u_i$ in $G$ and one of its neighbours in $W$). 
The set $W$ is also an independent set in $H$, otherwise we could get a copy of $L\cup tP_2$ in $H$ by adding to $G$ an edge from $E_H(W,W)$.
Since $E_H(\{v\},W)=\emptyset$ for any $v\in N$, it follows that $E_H(N,W)=\emptyset$. 
Therefore
\begin{equation}\label{EVV}\lvert E_H(V,V)\rvert=\lvert E_H(N,T)\rvert+\lvert E_H(T,T)\rvert+\lvert E_H(T,W)\rvert+\lvert E_H(F\cup U_L,V)\rvert.\end{equation}
By the definition of thin vertices,
\begin{equation}\label{ETW}\lvert E_H(T,W)\rvert\leq |T|=2(t-1-|F|),\end{equation}
and by the assumption that $\Delta(H)\leq 2t+\ell-1$, we have
\begin{equation}\label{EFV}\lvert E_H(F\cup U_L,V)\rvert\leq |F\cup U_L|(2t+\ell-1)-|E(L)|=(|F|+\ell)(2t+\ell-1)-|E(L)|.\end{equation}  
Substituting \eqref{ETW} and \eqref{EFV} in \eqref{EVV} and using \eqref{FTN} we get
\begin{align*}\lvert E_H(V,V)\rvert&=\lvert E_H(N,T)\rvert+\lvert E_H(T,T)\rvert+\lvert E_H(T,W)\rvert+\lvert E_H(F\cup U_L,V)\rvert\leq\\
&\leq |N|\cdot |T|+\binom{|T|}{2}+|T|+(|F|+\ell)(2t+\ell-1)-|E(L)|=\\
&=\left(|N|+\frac{|T|-1}{2}+1\right)|T|+(2t-1)(|F|+\ell)+\ell(|F|+\ell)-|E(L)|=\\
&=(2t-1)\left(\frac{|T|}{2}+|F|+\ell\right)+\ell(|F|+\ell)-|E(L)|=\\
&=(2t-1)(t-1+\ell)+\ell(|F|+\ell)-|E(L)|\leq(2t-1+\ell)(t-1+\ell)-|E(L)|.
\end{align*}
Therefore
$$(t+r)\left(n-\frac{t+r+1}{2}\right)+s+1\leq (2t-1+\ell)(t-1+\ell)-|E(L)|.$$
After some rearranging we get
$$(t+r)\left(n-\frac{5}{2}t+\frac{3}{2}r-3\ell+\frac{5}{2}\right)\leq (\ell-1)^2-3(\ell-1)r+2r^2-|E(L)|-s-1,$$
yielding a contradiction for $n>\frac{5}{2}t+\gamma_2$, if we take $\gamma_2$ such that
\begin{equation*}(t_1+1+r)\left(\gamma_2+\frac{3}{2}r-3\ell+\frac{5}{2}\right)\geq\max\left\{0\,,\, (\ell-1)^2-3(\ell-1)r+2r^2-|E(L)|-s-1\right\}.\qedhere\end{equation*}
\end{proof}

\begin{rem}\label{rem:gamma} As the proof above shows, $\gamma_2$ may be taken to be 
\begin{equation*}\max\left\{n_0-1-\frac{5}{2}t_1\,,\,3\ell-\frac{3}{2}r-\frac{5}{2}+\frac{(\ell-1)^2-3(\ell-1)r+2r^2-|E(L)|-s-1}{t_1+1+r}\right\}\end{equation*}
if $(\ell-1)^2-3(\ell-1)r+2r^2-|E(L)|-s-1\geq 0$, and
$\max\left\{n_0-1-\frac{5}{2}t_1\,,\,\frac{1}{2}\left\lceil 6\ell-3r-6\right\rceil\right\}$ otherwise.
\end{rem}
\medskip
When $L$ satisfies some additional restrictions, which will be described using the following definition, we can show, in Proposition \ref{prop:tP2tight} below, that the upper bound of Theorem \ref{thm:tP2} is actually tight.

\begin{udefin} For a graph $G=(V,E)$ and a non-negative integer $j$, let
$$q_j(G):=\min\{|R|\mid R\subseteq V\,,~|E_G(V- R,V-R)|\leq j\}.$$
Namely, $q_j(G)$ is the minimal size of a set of vertices incident with all but at most $j$ edges of $G$.
\end{udefin}
\begin{obs}\label{obs:q} Let $G=(V,E)$ be a graph, and let $r$ and $s$ be non-negative integers. If $\binom{|V|-r}{2}\leq s$ then $q_s(G)\leq r$.
( Since then $|E_G(V- R,V-R)|\leq\binom{|V|-r}{2}\leq s$ for {\bf any} set $R\subseteq V$ of cardinality $r$).
\end{obs}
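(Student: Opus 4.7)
The plan is to exhibit an explicit witness for the minimum defining $q_s(G)$. Assuming $|V|\geq r$, I would pick any $R\subseteq V$ with $|R|=r$; then $V-R$ has exactly $|V|-r$ vertices, and hence the induced subgraph on $V-R$ carries at most $\binom{|V|-r}{2}$ edges, which by hypothesis is at most $s$. This shows that $R$ is admissible in the minimum, giving $q_s(G)\leq |R|=r$. In the degenerate case $|V|<r$, the choice $R=V$ yields $|E_G(V-R,V-R)|=0\leq s$ and so $q_s(G)\leq |V|<r$ directly.

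There is essentially no obstacle here: the statement is an immediate application of the trivial bound $\binom{m}{2}$ on the number of edges of any simple graph on $m$ vertices, combined with the given inequality. My sketch is really just a slight expansion of the parenthetical hint already included in the observation, with the degenerate case $|V|<r$ spelled out explicitly. The content of the observation is that the bound $q_s(G)\leq r$ holds for \emph{any} graph on $|V|$ vertices under the hypothesis, because one need not use any structural information about $G$ beyond the trivial upper bound on its edge count.
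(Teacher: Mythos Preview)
Your proposal is correct and follows exactly the same approach as the paper, which gives the entire argument in the parenthetical remark: choose any $R\subseteq V$ with $|R|=r$ and bound $|E_G(V-R,V-R)|$ by $\binom{|V|-r}{2}\leq s$. Your explicit treatment of the degenerate case $|V|<r$ is a harmless addition the paper omits.
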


\begin{lem}\label{lem:q} Let $s$ be a positive integer.
\begin{enumerate}
\item Let $G$ be a graph and let $r_1$ be a non-negative integer. If $q_s(G)>r_1$, then for any integer $n\geq |V(G)|$,
\begin{equation*}\label{eq:lem1}AR(n,G)>r_1\left(n-\frac{r_1+1}{2}\right)+s.\end{equation*}
\item Let $L$ be a graph, and let $t_2\geq 0$ and $r_2\geq - t_2$ be integers. If $q_{s-i}(L\cup t_2P_2)>t_2+r_2+i$ for any $0\leq i\leq s$, then for any integers $t\geq t_2$ and $n\geq 2t+|V(L)|$,
\begin{equation*}\label{eq:lemt}AR(n,L\cup tP_2)>(t+r_2)\left(n-\frac{t+r_2+1}{2}\right)+s.\end{equation*}
\end{enumerate}
\end{lem}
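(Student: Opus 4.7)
The plan is to prove Part~1 by an explicit edge-colouring of $K^V$ and then derive Part~2 from Part~1 through a short combinatorial estimate on $q_s(L\cup tP_2)$.

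For Part~1, I would fix a set $R\subseteq V$ in $K^V$ (with $|V|=n$) of size $r_1$, assign a distinct colour to each of the $r_1\!\left(n-\frac{r_1+1}{2}\right)$ edges of $K^V$ that are incident to $R$, and colour the edges of $K^{V-R}$ with $s$ fresh additional colours, each of which actually occurs. The contrapositive of Observation~\ref{obs:q} applied to the hypothesis $q_s(G)>r_1$ yields $\binom{|V(G)|-r_1}{2}>s$, and hence $\binom{n-r_1}{2}\geq s$, so there is room on $K^{V-R}$ for all $s$ fresh colours. The total number of colours used is exactly $r_1\!\left(n-\frac{r_1+1}{2}\right)+s$, and there is no multicoloured copy of $G$: for any embedding $\phi$ of $G$ into $V$, the set $\phi^{-1}(R)$ has at most $r_1<q_s(G)$ vertices, so $|E_G(V(G)-\phi^{-1}(R),V(G)-\phi^{-1}(R))|\geq s+1$, and the images of these at least $s+1$ edges all lie in $K^{V-R}$, where only $s$ distinct colours occur.

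For Part~2, I would reduce to Part~1 by proving that the hypothesis implies $q_s(L\cup tP_2)>t+r_2$ for every $t\geq t_2$. Granting this reduction, applying Part~1 with $G:=L\cup tP_2$ and $r_1:=t+r_2$ (non-negative since $r_2\geq -t_2\geq -t$) immediately produces the stated bound at every $n\geq |V(L)|+2t=|V(L\cup tP_2)|$. To verify the claim, regard $L\cup t_2P_2$ as the subgraph of $L\cup tP_2$ on $V(L)$ together with the first $t_2$ matching edges, and take an arbitrary $R\subseteq V(L\cup tP_2)$ with $|R|\leq t+r_2$. Let $i$ be the number of matching edges of the $tP_2$-part both of whose endpoints lie outside $R$. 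Then $R$ meets each of the remaining $t-i$ matching edges via a distinct vertex of $V(tP_2)$, so $R$ contains at least $t-i$ vertices of $V(tP_2)$, forcing $|R\cap V(L)|\leq r_2+i$. If $i\geq s+1$, the $i$ surviving matching edges already witness $|E_{L\cup tP_2}(V-R,V-R)|\geq s+1$. If $i\leq s$, I would set $R^\star:=(R\cap V(L))\cup S$, where $S$ consists of one endpoint from each of the $t_2$ edges of the distinguished $t_2P_2$-subcopy, so that $R^\star\subseteq V(L\cup t_2P_2)$ has size at most $(r_2+i)+t_2=t_2+r_2+i$. The hypothesis $q_{s-i}(L\cup t_2P_2)>t_2+r_2+i$ now yields $|E(L)|-|E_L(R\cap V(L))|\geq s-i+1$, because $S$ covers every edge of the $t_2P_2$-component and so the only surviving edges of $L\cup t_2P_2$ after removing $R^\star$ are $L$-edges avoiding $R\cap V(L)$; adding back the $i$ uncovered matching edges of $tP_2$ then gives $|E_{L\cup tP_2}(V-R,V-R)|\geq (s-i+1)+i=s+1$, as desired.

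The only real (if mild) obstacle is the bookkeeping that keeps $|R^\star|$ within the budget $t_2+r_2+i$ allowed by the hypothesis; parameterising by the number $i$ of uncovered matching edges is what makes the inequality $|R\cap V(L)|\leq r_2+i$ automatic and pairs cleanly with the $i$-th instance of the hypothesis. Beyond this, everything is a straightforward application of the definition of $q_j$ together with the colour-counting argument used in Part~1.
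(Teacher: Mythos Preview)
Your proof is correct and follows essentially the same approach as the paper's. For Part~2, both you and the paper reduce to Part~1 by showing $q_s(L\cup tP_2)>t+r_2$; the paper does this via the concise identity $q_s(L\cup tP_2)=\min_{0\le i\le\min\{s,t-t_2\}}\bigl((t-t_2)-i+q_{s-i}(L\cup t_2P_2)\bigr)$, whereas your argument unpacks the same computation as a direct case analysis on the number of uncovered matching edges.
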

\begin{proof} 
To prove the first claim, let $V$ be the vertex set of $K_n$. Choose a set $R\subseteq V$ of cardinality $r_1$. By Observation \ref{obs:q}, 
$$\binom{|V-R|}{2}=\binom{n-r_1}{2}\geq\binom{|V(G)|-r_1}{2}>s.$$
Colour arbitrarily the edges of $K^{V- R}$ by exactly $s$ colours, and all other edges of $K_n$ by $r_1\left(n-\frac{r_1+1}{2}\right)$ distinct colours. Assume, by contradiction, that there is a multicoloured copy, $\tilde{G}$, of $G$. Then, 
$$|E_{\tilde{G}}(V(\tilde{G})- R,V(\tilde{G})- R)|\leq|E_{\tilde{G}}(V- R,V- R)|\leq s,$$
so $q_s(G)\leq|R\cap V(\tilde{G}|\leq|R|=r_1$ and we get a contradiction.

The second claim follows by applying the first claim to $G=L\cup t_2P_2$ and $r_1=t_2+r_2$, upon observing that for any $t\geq t_2$,
\begin{equation*}q_s(L\cup tP_2)=q_s\left((t-t_2)P_2\cup(L\cup t_2P_2)\right)=\min_{0\leq i\leq\min\{s,t-t_2\}}(t-t_2)-i+q_{s-i}(L\cup t_2P_2).
\qedhere
\end{equation*}
\end{proof}

Combining the upper bound of Theorem \ref{thm:tP2} and the lower bound of Lemma \ref{lem:q} we get:

\begin{propos}\label{prop:tP2tight}
Let $L$ be a graph and let $t_1,t_2\geq 0$, $r\geq -\min\{t_1,t_2\}$ and $s\geq 1$ be integers. Suppose that
\begin{itemize}
\item There is an integer $n_0\geq |V(L)|+2t_1$ such that $AR(n,L\cup t_1P_2)\leq (t_1+r)\left(n-\frac{t_1+r+1}{2}\right)+s+1$ for any integer $n\geq n_0$.
\item $q_{s-i}(L\cup t_2P_2)>t_2+r+i$ for any $0\leq i\leq s$.
\end{itemize}

Then, there is a constant $\gamma_2$, depending only on $L$, $t_1$, $r$, $s$ and $n_0$, such that for any integers $t\geq\max\{t_1,t_2\}$ and $n>\frac{5}{2}t+\gamma_2$, \begin{equation*}\label{eq:LtP2}AR(n,L\cup tP_2)=(t+r)\left(n-\frac{t+r+1}{2}\right)+s+1.\end{equation*} 
\end{propos}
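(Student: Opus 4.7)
The plan is to sandwich $AR(n,L\cup tP_2)$ between matching integer upper and lower bounds, using Theorem \ref{thm:tP2} for the upper one and Lemma \ref{lem:q}(2) for the lower one. The first hypothesis of the proposition is literally the hypothesis of Theorem \ref{thm:tP2}, so that theorem supplies a constant $\gamma_2'$ depending only on $L,t_1,r,s,n_0$ for which
$$AR(n,L\cup tP_2)\leq (t+r)\left(n-\frac{t+r+1}{2}\right)+s+1$$
for every integer $t\geq t_1$ and $n>\frac{5}{2}t+\gamma_2'$.

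For the lower bound, the second hypothesis is exactly what Lemma \ref{lem:q}(2) requires with $r_2:=r$ (note $r\geq-t_2$ since $r\geq-\min\{t_1,t_2\}$), so that lemma gives
$$AR(n,L\cup tP_2)>(t+r)\left(n-\frac{t+r+1}{2}\right)+s$$
for every integer $t\geq t_2$ and $n\geq 2t+|V(L)|$. Since $r,s,t$ are integers and $(t+r)(t+r+1)$ is a product of consecutive integers (hence even), the quantity $(t+r)\bigl(n-\frac{t+r+1}{2}\bigr)+s$ is an integer, so the strict inequality upgrades to $AR(n,L\cup tP_2)\geq(t+r)\bigl(n-\frac{t+r+1}{2}\bigr)+s+1$.

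It remains only to reconcile the two ranges of validity in $n$. Taking $\gamma_2:=\max\{\gamma_2',\,|V(L)|\}$, the assumption $n>\frac{5}{2}t+\gamma_2$ implies both $n>\frac{5}{2}t+\gamma_2'$ and $n>\frac{5}{2}t+|V(L)|\geq 2t+|V(L)|$ for every $t\geq 0$, so both estimates are in force simultaneously and must coincide. This forces the claimed equality for all $t\geq\max\{t_1,t_2\}$ and $n>\frac{5}{2}t+\gamma_2$. No step is technically demanding; the only minor subtlety is the integrality argument promoting the strict lower bound of the lemma to the matching integer bound $+1$, after which the proposition follows immediately from the two earlier results.
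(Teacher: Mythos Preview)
Your proof is correct and follows exactly the approach the paper takes: the paper's entire argument is the one-line remark that the proposition follows by combining the upper bound of Theorem~\ref{thm:tP2} with the lower bound of Lemma~\ref{lem:q}, and you have simply filled in the routine details (checking $r\ge -t_2$, the integrality step, and enlarging $\gamma_2$ to cover the range $n\ge 2t+|V(L)|$ needed for the lemma).
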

\medskip
We now show several consequences of Proposition \ref{prop:tP2tight} and Theorem \ref{thm:tP2}.

\begin{cor}\label{cor:tP2}  
For any integers $t\geq 2$ and $n>\frac{5t+3}{2}$,
\begin{equation}\label{eq:tP2again}AR(n,tP_2)=(t-2)\left(n-\frac{t-1}{2}\right)+2.\end{equation}
\end{cor}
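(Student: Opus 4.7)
The plan is to obtain the corollary as a direct instance of Proposition \ref{prop:tP2tight}, taking $L$ to be the empty graph (so $\ell=|V(L)|=0$ and $|E(L)|=0$), $t_1=t_2=2$, $r=-2$, and $s=1$. With these parameters, the general formula $(t+r)\left(n-\tfrac{t+r+1}{2}\right)+s+1$ collapses to $(t-2)\left(n-\tfrac{t-1}{2}\right)+2$, which is exactly the asserted value.

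I would first verify the two hypotheses of Proposition \ref{prop:tP2tight}. The required linear upper bound $AR(n,2P_2)\le 2$ for $n\ge n_0$ holds with $n_0=5$ by the Fujita--Kaneko--Schiermeyer--Suzuki formula \eqref{eq:tP_2} applied at $t=2$; note that $n_0=5\ge 2t_1=4$ as required. The $q$-condition asks that $q_{1-i}(2P_2)>i$ for $i=0,1$: for $i=0$, any single vertex of $2P_2$ meets one of its two edges, so $q_1(2P_2)=1$; for $i=1$, the minimum vertex cover of $2P_2$ has size $2$, so $q_0(2P_2)=2$. Both inequalities are satisfied.

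Proposition \ref{prop:tP2tight} therefore yields the claimed equality for all $t\ge 2$ and $n>\tfrac{5}{2}t+\gamma_2$ for some constant $\gamma_2$. To pin down the exact threshold $n>(5t+3)/2$ I would invoke Remark \ref{rem:gamma}. The governing quantity $(\ell-1)^2-3(\ell-1)r+2r^2-|E(L)|-s-1=1-6+8-0-2=1$ is nonnegative, so the Remark permits the choice
$$\gamma_2=\max\left\{n_0-1-\tfrac{5}{2}t_1,\ 3\ell-\tfrac{3}{2}r-\tfrac{5}{2}+\tfrac{1}{t_1+1+r}\right\}=\max\left\{-1,\ \tfrac{3}{2}\right\}=\tfrac{3}{2},$$
which produces precisely the threshold $n>\tfrac{5}{2}t+\tfrac{3}{2}=(5t+3)/2$ stated in the corollary.

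There is no substantive obstacle here: the upper bound is already delivered by the inductive argument of Theorem \ref{thm:tP2}, the matching lower bound by the $q$-condition combined with Lemma \ref{lem:q}, and the only remaining task is the arithmetic verification that $\gamma_2$ works out to exactly $3/2$ under the chosen parameters. The entire proof is essentially a matter of choosing $L,t_1,t_2,r,s,n_0$ correctly and reading off the conclusion.
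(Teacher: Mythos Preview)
Your proposal is correct and follows essentially the same route as the paper: take $L$ empty, $t_1=t_2=2$, $r=-2$, $s=1$, $n_0=5$ in Proposition~\ref{prop:tP2tight}, verify $q_1(2P_2)=1$, $q_0(2P_2)=2$, and then read off $\gamma_2=\tfrac{3}{2}$ from Remark~\ref{rem:gamma}. The only cosmetic difference is that the paper cites \cite[Lemma~3.1]{BGR} for $AR(n,2P_2)=2$ when $n\ge 5$, whereas you obtain it from \eqref{eq:tP_2}; either source yields the needed input.
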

\begin{proof}
It is easy to see that $AR(n,2P_2)=2$ for any $n\geq 5$ (see \cite[Lemma 3.1]{BGR}), and clearly $q_1(2P_2)=1$ and $q_0(2P_2)=2$. The claim follows by taking $L$ to be the empty graph, $t_1=t_2=2$, $r=-2$, $s=1$ and $n_0=5$ in Proposition \ref{prop:tP2tight} and Remark \ref{rem:gamma}.
\end{proof}

\begin{urem}
As mentioned in the introduction, Fujita, Kaneko, Schiermeyer and Suzuki proved (\cite{FKSS}) that \eqref{eq:tP2again} holds for any $t\geq 2$, $n\geq\max\{2t+1,\frac{5t-7}{2}\}$, after Schiermeyer first showed (\cite{S}) it holds for $t\geq 2$, $n\geq 3t+3$.
\end{urem}

\begin{cor}\label{cor:PktP2} \begin{enumerate}
\item For any integers $t\geq 2$ and $n>\frac{5}{2}t+12$,
$$AR(n,P_3\cup tP_2) =(t-1)\left(n-\frac{t}{2}\right)+2.$$
\item For any integers $t\geq 1$ and $n\geq\frac{5}{2}t+12$,
$$AR(n,P_4\cup tP_2) =t\left(n-\frac{t+1}{2}\right)+2.$$
\item For any integers $k\geq 4$, $t\geq 0$ and $n\geq 2t+k+1$,
$$ AR(n,P_{k+1}\cup tP_2)\geq(t+\lceil k/2\rceil-2)\left(n-\frac{t+\lceil k/2\rceil-1}{2}\right)+2,$$
and for any integer $k\geq 4$ there is a constant $\gamma_2(P_{k+1})$ such that for any integers $t\geq 0$ and $n>\frac{5}{2}t+\gamma_2(P_{k+1})$,
$$AR(n,P_{k+1}\cup tP_2)\leq (t+\lfloor k/2\rfloor-1)\left(n-\frac{t+\lfloor k/2\rfloor}{2}\right)+2+k\bmod 2.$$
\end{enumerate}
\end{cor}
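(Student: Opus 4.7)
The plan is to derive parts (1) and (2) by invoking Proposition \ref{prop:tP2tight}, and part (3) by combining Theorem \ref{thm:tP2} (upper bound) with Lemma \ref{lem:q}(1) (lower bound).

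For part (1), I match $(t-1)\left(n-t/2\right)+2$ to the template of Proposition \ref{prop:tP2tight} to read off $L=P_3$, $r=-1$, $s=1$, and choose $t_1=t_2=2$. The base-case hypothesis $AR(n,P_3\cup 2P_2)\leq n+1$ is supplied by \cite{BGR}. For the two $q$-conditions, I compute directly that $q_0(P_3\cup 2P_2)=3$ and $q_1(P_3\cup 2P_2)=2$, using that the only vertex of degree $2$ in $P_3\cup 2P_2$ is the center of $P_3$, so at most $k+1$ edges can be covered by any $k$ vertices. Substituting $\ell=3$, $|E(L)|=2$, $r=-1$, $s=1$, $t_1=2$ into Remark \ref{rem:gamma} yields the explicit constant $\gamma_2=12$.

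For part (2), matching $t(n-(t+1)/2)+2$ to the template gives $L=P_4$, $r=0$, $s=1$, with $t_1=t_2=1$. The base case $AR(n,P_4\cup P_2)\leq n+1$ is again from \cite{BGR}; note that $t_1=0$ would fail, since \eqref{eq:P_k} gives $AR(n,P_4)=3$, one larger than the template allows. The $q$-conditions reduce to $q_0(P_4\cup P_2)=3$ and $q_1(P_4\cup P_2)=2$, both immediate from the structure of $P_4\cup P_2$ (max degree $2$, with the two internal vertices of $P_4$ jointly covering $3$ edges). Remark \ref{rem:gamma} once more evaluates to at most $12$ after rounding.

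For part (3), the upper bound is an immediate application of Theorem \ref{thm:tP2} with $L=P_{k+1}$, $t_1=0$, using \eqref{eq:P_k} as the base case; this gives $r=\lfloor k/2\rfloor-1$ and $s=1+(k\bmod 2)$, and the stated inequality follows. For the lower bound, I apply Lemma \ref{lem:q}(1) to $G=P_{k+1}\cup tP_2$ with $r_1=t+\lceil k/2\rceil-2$ and $s=1$, so it suffices to verify $q_1(P_{k+1}\cup tP_2)\geq t+\lceil k/2\rceil-1$. Using the recursive identity from the end of the proof of Lemma \ref{lem:q}, with $t_2=0$, this reduces to showing $q_0(P_{k+1})\geq\lceil k/2\rceil$ (the vertex-cover number of a path with $k$ edges) and $q_1(P_{k+1})\geq\lceil k/2\rceil-1$ (which follows from the handshake bound $2|R|\geq k-1$ for any set $R$ leaving at most one edge uncovered). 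The main bookkeeping obstacle throughout is verifying that the constant in parts (1) and (2) is exactly $12$, which requires correctly identifying the value of $n_0$ coming from \cite{BGR} and plugging it through the formula of Remark \ref{rem:gamma}.
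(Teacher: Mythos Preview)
Your proposal is correct and follows essentially the same approach as the paper. The only cosmetic differences are your choices of $t_2$ (you take $t_2=t_1$ in parts (1) and (2), whereas the paper takes $t_2=t_1-1$, checking $q_1(P_3\cup P_2)=1$, $q_0(P_3\cup P_2)=2$ and $q_1(P_4)=1$, $q_0(P_4)=2$ instead), and in part (3) the paper invokes Lemma~\ref{lem:q}(2) directly rather than Lemma~\ref{lem:q}(1) together with the recursive identity; both routes are equivalent and your $q$-computations and constant calculations are sound.
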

\begin{proof}
By \cite[Proposition 6.1]{BGR}, $AR(n,P_3\cup 2P_2)=n+1$ for any $n\geq 7$, and clearly $q_1(P_3\cup P_2)=1$ and $q_0(P_3\cup P_2)=2$. Taking $L=P_3$, $t_1=2$, $t_2=1$, $r=-1$, $s=1$ and $n_0=7$ in Proposition \ref{prop:tP2tight} and Remark \ref{rem:gamma}, we obtain the first part of the corollary. 

By \cite[Proposition 6.3]{BGR}, $AR(n,P_4\cup P_2)=n+1$ for any $n\geq 6$, and clearly $q_1(P_4)=1$ and $q_0(P_4)=2$. Taking $L=P_4$, $t_1=1$, $t_2=0$, $r=0$, $s=1$ and $n_0=6$ in Proposition \ref{prop:tP2tight} and Remark \ref{rem:gamma}, we obtain the second part of the corollary. 

For $k\geq 4$, the lower bound for $AR(n,P_{k+1}\cup tP_2)$  follows, since $q_1(P_{k+1})=\lfloor k/2\rfloor$ and $q_0(P_{k+1})=\lceil k/2\rceil$, by taking $L=P_{k+1}$, $t_2=0$, $r_2=\lceil k/2\rceil-2$ and $s=1$ in Lemma \ref{lem:q}. The upper bound follows from \eqref{eq:P_k} by taking $L=P_{k+1}$, $t_1=0$, $r=\lfloor k/2\rfloor-1$ and $s=k\bmod 2+1$ in Theorem \ref{thm:tP2}.
\end{proof}
\begin{urem} Note that for odd $k>4$, the upper and lower bounds we get, in Corollary \ref{cor:PktP2}, for $AR(n,P_{k+1}\cup tP_2)$ (for large enough $n$) differ only by 1.
\end{urem}

\begin{cor}\label{cor:C3tP2} For any integers $t\geq 1$ and $n>\frac{5}{2}t+6$,
$$AR(n,C_3\cup tP_2)=t\left(n-\frac{t+1}{2}\right)+2.$$
In addition, for any integers $k\geq 4$, $t\geq 0$ and $n\geq 2t+k$, 
\begin{equation*}AR(n,C_k\cup tP_2)\geq (t+\lceil k/2\rceil-2)\left(n-\frac{t+\lceil k/2\rceil-1}{2}\right)+2,\end{equation*}
and for any integers $k\geq 4$, $t\geq 0$ and $n>\frac{5}{2}t+\frac{9}{4}k-\frac{5}{4}$,
$$AR(n,C_k\cup tP_2)\leq\left(t+\frac{k}{2}+\frac{1}{k-1}-1\right)\left(n-\frac{t+\frac{k}{2}+\frac{1}{k-1}}{2}\right)+\frac{1}{2}\left(\frac{k}{2}+\frac{1}{k-1}\right)\left(\frac{k}{2}+\frac{1}{k-1}-1\right).$$
\end{cor}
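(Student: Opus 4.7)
The corollary splits into three claims, each a direct application of an earlier result in this section.

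For the equality for $C_3\cup tP_2$, the strategy is to invoke Proposition \ref{prop:tP2tight} with $L=C_3$, $t_1=1$, $t_2=0$, $r=0$, $s=1$. The base case $AR(n,C_3\cup P_2)\leq n+1$ for large $n$ is taken from the tables of four-edge anti-Ramsey numbers in \cite{BGR}, and the two lower-bound hypotheses reduce to $q_1(C_3)=1>0$ and $q_0(C_3)=2>1$, both trivial. Inserting $\ell=|V(L)|=3$, $|E(L)|=3$, $r=0$, $s=1$ into Remark \ref{rem:gamma} one checks that the quadratic quantity there equals $-1$, so the second clause applies and yields $\gamma_2=\tfrac12\lceil 6\ell-3r-6\rceil=6$.

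For the lower bound on $AR(n,C_k\cup tP_2)$ when $k\geq 4$, apply Lemma \ref{lem:q}(2) with $L=C_k$, $t_2=0$, $r_2=\lceil k/2\rceil-2$ and $s=1$. The hypothesis $q_0(C_k)>\lceil k/2\rceil-1$ is the standard vertex-cover bound for a cycle, while $q_1(C_k)>\lceil k/2\rceil-2$ follows because every removed vertex meets at most two edges of $C_k$, so removing any $\lceil k/2\rceil-2$ vertices leaves at least $k-2(\lceil k/2\rceil-2)\geq 2$ edges of the cycle.

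For the upper bound I would apply Theorem \ref{thm:tP2} with $L=C_k$, $t_1=0$, and with $r=\tfrac{k}{2}+\tfrac{1}{k-1}-1$ and $s$ chosen so that $s+1=\tfrac{r(r+1)}{2}=\tfrac12\bigl(\tfrac{k}{2}+\tfrac{1}{k-1}\bigr)\bigl(\tfrac{k}{2}+\tfrac{1}{k-1}-1\bigr)$, which forces the Theorem's bound at $t=0$ to reduce to exactly $rn$. The base hypothesis is therefore $AR(n,C_k)\leq rn$; writing $n=q(k-1)+m$ with $0\leq m<k-1$ in \eqref{eq:C_k}, this reduces after cancelling $q\binom{k-1}{2}+q$ to the inequality $\tfrac{m(k-1-m)}{2}+\tfrac{m}{k-1}\geq 1$ for $1\leq m\leq k-2$, whose left-hand side is minimised at $m=1$ and $m=k-2$ and equals $\tfrac{k-2}{2}+\tfrac{1}{k-1}\geq 1$ for every $k\geq 4$. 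The quadratic expression in Remark \ref{rem:gamma} with these parameters is dominated by $-k^2/8$ and hence negative for all $k\geq 4$, so the second clause controls $\gamma_2$; a short case-check shows $\tfrac12\lceil 6k-3r-6\rceil\leq\tfrac{9}{4}k-\tfrac{5}{4}$ for $k\geq 4$, which is the bound asserted.

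The main obstacle is purely arithmetic: the careful choice of the pair $(r,s)$ making the right-hand side of Theorem \ref{thm:tP2} collapse to $rn$ at $t=0$, followed by checking that the discrete formula \eqref{eq:C_k} always lies under this smooth affine upper bound, and finally matching the estimate from Remark \ref{rem:gamma} to the clean value $\tfrac{9}{4}k-\tfrac{5}{4}$. Everything else is a direct quotation of Proposition \ref{prop:tP2tight}, Lemma \ref{lem:q}, Theorem \ref{thm:tP2} and Remark \ref{rem:gamma}.
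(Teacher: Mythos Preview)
Your proof is correct and follows exactly the paper's approach: Proposition~\ref{prop:tP2tight} (with $L=C_3$, $t_1=1$, $t_2=0$, $r=0$, $s=1$, $n_0=6$) for the $C_3$ case, Lemma~\ref{lem:q} (with $L=C_k$, $t_2=0$, $r_2=\lceil k/2\rceil-2$, $s=1$) for the lower bound, and Theorem~\ref{thm:tP2} (with $L=C_k$, $t_1=0$, $r=\tfrac{k}{2}+\tfrac{1}{k-1}-1$, $s=\tfrac{r(r+1)}{2}-1$, $n_0=k$) together with Remark~\ref{rem:gamma} for the upper bound. You supply more arithmetic detail than the paper does---in particular the verification that \eqref{eq:C_k} lies below $rn$ and that the quadratic in Remark~\ref{rem:gamma} is negative---but the structure is identical; the only quibble is that the minimum of $\tfrac{m(k-1-m)}{2}+\tfrac{m}{k-1}$ over $1\le m\le k-2$ is attained at $m=1$ alone, not also at $m=k-2$, though your conclusion is unaffected.
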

\begin{proof} By \cite[Proposition 6.2]{BGR}, $AR(n,C_3\cup P_2)=n+1$ for any $n\geq 6$, and clearly $q_1(C_3)=1$ and $q_0(C_3)=2$. Taking $L=C_3$, $t_1=1$, $t_2=0$, $r=0$, $s=1$ and $n_0=6$ in Proposition \ref{prop:tP2tight} and Remark \ref{rem:gamma} we get the first claim.

For $k\geq 4$, the lower bound for $AR(n,C_k\cup tP_2)$  follows, since $q_1(C_k)=\lfloor k/2\rfloor$ and $q_0(C_k)=\lceil k/2\rceil$, by taking $L=C_k$, $t_2=0$, $r_2=\lceil k/2\rceil-2$ and $s=1$ in Lemma \ref{lem:q}. The upper bound follows by taking $L=C_k$, $t_1=0$, $r=\frac{k}{2}+\frac{1}{k-1}-1$, $s=\frac{1}{2}\left(\frac{k}{2}+\frac{1}{k-1}\right)\left(\frac{k}{2}+\frac{1}{k-1}-1\right)-1$ and $n_0=k$ in Theorem \ref{thm:tP2} and Remark \ref{rem:gamma}, since by \eqref{eq:C_k}, for any integers $n\geq k\geq 4$,  
\begin{gather*}AR(n,C_k)=\binom{k-1}{2}\left\lfloor\frac{n}{k-1}\right\rfloor+\left\lceil\frac{n}{k-1}\right\rceil+\binom{n\bmod(k-1)}{2}\leq\left(\frac{k-2}{2}+\frac{1}{k-1}\right)n=\\
=\left(\frac{k}{2}+\frac{1}{k-1}-1\right)\left(n-\frac{\frac{k}{2}+\frac{1}{k-1}}{2}\right)+\frac{1}{2}\left(\frac{k}{2}+\frac{1}{k-1}\right)\left(\frac{k}{2}+\frac{1}{k-1}-1\right).\qedhere\end{gather*}
\end{proof}

\section{The anti-Ramsey number of $L\cup tP_3$}\label{sec:tP3}

\begin{theorem}\label{thm:LtP3} Let $L$ be a graph, let $k_1\geq 0$ and $n_0\geq |V(L)|+3k_1$ be integers, and let $r$ and $s$ be real numbers. Suppose that $AR(n,L\cup k_1P_2)\leq (k_1+r)\left(n-\frac{k_1+r+1}{2}\right)+s+1$ for any integer $n\geq n_0$.

Then, there is a constant $\gamma_3$, depending only on $L$, $k_1$, $r$, $s$ and $n_0$, such that for any integers $k\geq k_1$ and $n>5k+\gamma_3$, 
\begin{equation*}\label{eq:LtP2}AR(n,L\cup kP_3)\leq (k+r)\left(n-\frac{k+r+1}{2}\right)+s+1.\end{equation*} 
\end{theorem}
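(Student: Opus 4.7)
Proof proposal. The natural approach is to mimic the structure of Theorem~\ref{thm:tP2} (there, the induction added one $P_2$ at a time; here, one $P_3$ at a time, so the vertex threshold worsens from $\tfrac52t+\gamma_2$ to $5k+\gamma_3$). We induct on $k\ge k_1$; the base case $k=k_1$ follows from the stated hypothesis provided $\gamma_3$ is chosen so that $5k_1+\gamma_3\ge n_0-1$. For the inductive step, fix $k>k_1$ and an edge-colouring $c$ of $K^V$ with $|V|=n>5k+\gamma_3$ using at least $(k+r)\bigl(n-\tfrac{k+r+1}{2}\bigr)+s+1$ colours, write $\ell:=|V(L)|$, and split according to whether some vertex has large colour degree.

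\emph{Case 1: some $v_0\in V$ has $d_c(v_0)\ge 3k+\ell$.} Apply the recolouring trick from Theorem~\ref{thm:tP2}: recolour every edge of $K^{V\setminus\{v_0\}}$ whose $c$-colour lies in $C(v_0)$ by one common fresh colour $c_0$. The resulting colouring $c^*$ of $K^{V\setminus\{v_0\}}$ uses at least $(k-1+r)\bigl(n-1-\tfrac{(k-1)+r+1}{2}\bigr)+s+1$ colours (identical arithmetic to Theorem~\ref{thm:tP2}), and $n-1>5(k-1)+\gamma_3$, so the induction hypothesis yields a $c^*$-multicoloured (hence $c$-multicoloured) copy $G$ of $L\cup(k-1)P_3$. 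At most one colour of $G$ lies in $C(v_0)$, so $v_0$ is incident under $c$ with at least $3k+\ell-1$ edges carrying ``fresh'' colours; since $|V(G)|=\ell+3(k-1)=3k+\ell-3$, at least two of these fresh-colour edges $v_0w_1$ and $v_0w_2$ satisfy $w_1,w_2\in V\setminus(V(G)\cup\{v_0\})$. The path $w_1v_0w_2$ is a $P_3$ vertex-disjoint from $G$ whose two edges carry new distinct colours, giving the desired multicoloured $L\cup kP_3$.

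\emph{Case 2: $d_c(v)\le 3k+\ell-1$ for every $v\in V$.} By the induction hypothesis fix a multicoloured copy $G$ of $L\cup(k-1)P_3$ and, as in Theorem~\ref{thm:tP2}, form $H$ by appending to $E(G)$ one edge of each colour not represented in $G$, so that $\Delta(H)\le 3k+\ell-1$ and $|E(H)|$ equals the total number of colours. Assume for contradiction that $H$ contains no copy of $L\cup kP_3$, and partition $V=U_L\cup U\cup W$ with $U$ the vertex set of the $(k-1)P_3$ part. The first structural observation is that $H[W]$ contains no $P_3$ (else the $P_3$ may be appended to $G$ directly); hence $H[W]$ is a matching and $|E_H(W,W)|\le |W|/2$. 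The second, much finer, observation is that for each of the $k-1$ paths $a_ib_ic_i$ of $G$ we must control the pattern of $H$-edges both from $\{a_i,b_i,c_i\}$ to $W$ and across the $P_3$'s, because any configuration letting us rewire the $P_3$ through a $W$-vertex (forming a new $P_3$ on two of $\{a_i,b_i,c_i\}$ and a vertex of $W$) while simultaneously salvaging a second $P_3$ from the freed vertex together with $H$-edges to $W$ or to the $H$-matching inside $W$ would contradict the no-$L\cup kP_3$ hypothesis. Classifying the $k-1$ paths by their interaction type, summing the resulting degree and edge bounds, and adding $|E_H(U_L,V)|\le\ell(3k+\ell-1)-|E(L)|$, yields an inequality of the shape $(k+r)\bigl(n-5k+O(1)\bigr)\le\mathrm{poly}(k,\ell,r,s)$, which fails once $\gamma_3$ is chosen large enough in terms of $L,k_1,r,s,n_0$.

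\emph{Main obstacle.} The delicate step is the classification in Case~2. The $P_2$-analog needed only the clean fat/thin dichotomy of Theorem~\ref{thm:tP2}, because each $P_2$ has symmetric endpoints and a single edge. Here each $P_3$ has an asymmetric middle/endpoint structure and two edges, so a single edge from $U$ to $W$ can already be used to rewire one $P_3$ into a $P_3$ touching $W$ while freeing one vertex, after which the question is whether the freed vertex, combined with $H$-edges and the matching on $W$, yields one extra $P_3$. Isolating the precise multi-case forbidden configurations, and showing that the surviving edge bound yields the stated coefficient $5k$ rather than a larger one, is the technical heart of the proof.
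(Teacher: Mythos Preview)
Your overall architecture is exactly the paper's: induction on $k$, the split on whether some vertex has colour degree at least $3k+\ell$, the recolouring trick and path $w_1v_0w_2$ in Case~1, and in Case~2 the auxiliary graph $H$ with $\Delta(H)\le 3k+\ell-1$ and the observation that $H[W]$ is a matching so $|E_H(W,W)|\le|W|/2$. All of this matches the paper and is correct.

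The gap is precisely where you flag it: Case~2 is not carried out, and your ``Main obstacle'' paragraph overestimates how intricate the classification needs to be. The paper does \emph{not} need a multi-case analysis of rewiring patterns through individual $P_3$'s. Instead it uses a direct analogue of the fat/thin dichotomy from Theorem~\ref{thm:tP2}: call $u\in U$ \emph{fat} if $|E_H(\{u\},W)|\ge 3$ and \emph{thin} otherwise. One then checks that if $u$ is fat and $v$ lies on the same $P_3$ of $G$, then $|E_H(\{v\},W)|\le 1$ (else a short case analysis rewires that $P_3$ into two $P_3$'s using $u$'s and $v$'s $W$-neighbours). Hence each $P_3$ contains at most one fat vertex, and with $F$ the fat set, $N$ the thin vertices on fat paths, $T$ the remaining thin vertices, one has $|N|=2|F|$ and $|T|=3(k-1-|F|)$. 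The only genuinely new ingredient beyond Theorem~\ref{thm:tP2} is a bound on $|E_H(N,N)|$: if $u,w_1,w_2\in N$ with $w_1,w_2$ on the same $P_3$, then $u$ is $H$-adjacent to at most one of $w_1,w_2$ (again by a short rewiring argument using the fat vertices' $W$-neighbours), giving $|E_H(N,N)|\le\frac12|N|\cdot\frac{|N|}{2}=|F|^2$. Summing $|E_H(N,N)|+|E_H(N,T)|+|E_H(T,T)|+|E_H(N,W)|+|E_H(T,W)|+|E_H(W,W)|+|E_H(F\cup U_L,V)|$ with the obvious bounds $|E_H(N,W)|\le|N|$, $|E_H(T,W)|\le 2|T|$, $|E_H(F\cup U_L,V)|\le(|F|+\ell)(3k+\ell-1)-|E(L)|$ collapses (after optimising over $|F|$) to $\frac{n}{2}+\frac12(3k+\ell-2)^2+\ell^2-|E(L)|-\frac12$, and comparison with the colour count gives the factor $5k$ in the threshold.

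So the missing idea is simply: raise the fat threshold from $2$ to $3$, and add the single extra bound on $E_H(N,N)$. No finer rewiring taxonomy is needed.
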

\begin{proof} The proof is by induction on $k$, and follows the same path as the proof of Theorem \ref{thm:tP2}, but uses some slightly more elaborate arguments and calculations.
The base case, $k=k_1$, is obvious (provided $5k_1+\gamma_3\geq n_0-1$). 

Now let $k>k_1$, and assume that $AR(n,L\cup (k-1)P_3)\leq(k-1+r)\left(n-\frac{k-1+r+1}{2}\right)+s+1$ for any $n>5(k-1)+\gamma_3$. Let $c$ be an edge-coloring of $K^V$, where $|V|=n>5k+\gamma_3$, by at least $(k+r)\left(n-\frac{k+r+1}{2}\right)+s+1$ colours. We will find a multicoloured copy of $L\cup kP_3$ in $K^V$. Let $\ell:=|V(L)|$.
The proof is divided into two cases.
\medskip\newline
{\bf Case 1.} $d_c(v_0)\geq 3k+\ell$ for some vertex $v_0$.
\medskip

Changing the colour of every edge $e$ of $K^{V-\{v_0\}}$ for which $c(e)\in C(v_0)$ (if there are any such edges) to some common colour $c_0$, we get an edge-colouring $c^*$ of $K^{V-\{v_0\}}$ by at least 
$$(k+r)\left(n-\frac{k+r+1}{2}\right)+s+1-(n-1)=(k-1+r)\left(n-1-\frac{k-1+r+1}{2}\right)+s+1$$
colours. Since clearly $n-1>5k+\gamma_3-1>5(k-1)+\gamma_3$, we get by the induction hypothesis that $K^{V-\{v_0\}}$ contains a copy $G$ of $L\cup (k-1)P_3$ which is multicoloured with respect to $c^*$, and therefore also according to the original colouring $c$.

The vertex $v_0$ is the endpoint of at least $3k+\ell$ edges with distinct colours (with respect to $c$). The other endpoint of at most $3(k-1)+\ell$ of those edges is a vertex of $G$. Also, at most one of those edges have the same colour, according to $c$, as an edge of $G$ (since at most one of the edges of $G$ is coloured by the colour $c_0$ according to $c^*$, i.e., by a colour in $C(v_0)$ with respect to $c$). Therefore we are surely left with at least two edges, with different colours, $v_0w_1$ and $v_0w_2$, such that $w_1,w_2\notin V(G)$ and $c(v_0w_1),c(v_0w_2)\notin\{c(e)\mid e\in E(G)\}$. By adding the path $w_1v_0w_2$ to $G$ we get the desired muticoloured copy of $L\cup kP_3$.  
\medskip\newline {\bf Case 2.} $d_c(v)\leq 3k+\ell-1$ for all $v\in V$.
\medskip

By the induction hypothesis $K^V$ clearly contains a multicoloured copy, $G$, of $L\cup (k-1)P_3$. Assume, by contradiction, that $K^V$ does not contain a multicoloured copy of $L\cup kP_3$.

Form a graph $H$ on the vertex set $V$ by adding to the edges of $G$ a single edge of each colour of $c$ not represented in $G$. By our assumptions, $d_H(v)\leq 3k+\ell-1$ for any $v\in V$, and $H$ does not contain a copy of $L\cup kP_3$.   

Let $U_L$ be the vertex set of the $L$ part of $G$, $U$ the vertex set of the $(k-1)P_3$ part of $G$, and let $W:=V- (U_L\cup U)$. 
Call a vertex $u\in U$ {\bf fat} if $\lvert E_H(\{u\},W)\rvert\geq 3$, and {\bf thin} otherwise.

If $u,v\in U$ are vertices of the same path in $G$, and $u$ is fat, then $\lvert E_H(\{v\},W)\rvert\leq 1$. (Otherwise, a simple case analysis shows that we could get from $G$ a copy of $L\cup kP_3$ in $H$ by replacing the path containing $u$ and $v$ by two paths, combined of $u$, $v$, the third vertex in their path in $G$, and some neighbours in $W$ of $u$ and $v$). In particular, any path in $G$ contains at most one fat vertex.
 
Let $F\subseteq U$ be the set of fat vertices, $N\subseteq U$ the set of thin vertices that their path in $G$ contains a fat vertex, and $T\subseteq U$ the set of all other thin vertices. Notice that 
\begin{equation}\label{NFT2}|N|=2|F|,\quad |T|=3(k-1-|F|).\end{equation}

If $u, w_1, w_2\in N$ such that $w_1, w_2$ are in the same path in $G$, then $u$ is adjacent to at most one of the vertices $w_1, w_2$. (Otherwise, a simple case analysis shows that we could get from $G$ a copy of $L\cup kP_3$ in $H$ by replacing the two paths containing $u$, $w_1$ and $w_2$ by three paths, combined from the vertices of those two paths and neighbours in $W$ of their fat vertices). We therefore have,
\begin{equation}\label{ENN2}\lvert E_H(N,N)\rvert\leq \frac{1}{2}\cdot|N|\cdot\frac{|N|}{2}=|F|^2.\end{equation}
Also,
\begin{equation}\label{EWW2}\lvert E_H(W,W)\rvert\leq \frac{|W|}{2}=\frac{n-3(k-1)-\ell}{2},\end{equation} 
otherwise $E_H(W,W)$ contains at least two adjacent edges, and by adding them to $G$ we would get a copy of $L\cup kP_3$ in $H$. 
Since $\lvert E_H(\{v\},W)\rvert\leq 1$ for any $v\in N$, it follows that
\begin{equation}\label{ENW2}\lvert E_H(N,W)\rvert\leq |N|=2|F|,\end{equation}
by the definition of thin vertices,
\begin{equation}\label{ETW2}\lvert E_H(T,W)\rvert\leq 2|T|=2\cdot 3(k-1-|F|),\end{equation}
and by the assumption that $\Delta(H)\leq 3k+\ell-1$, we have
\begin{equation}\label{EFV2}\lvert E_H(F\cup U_L,V)\rvert\leq |F\cup U_L|(3k+\ell-1)-|E(L)|=(|F|+\ell)(3k+\ell-1)-|E(L)|.\end{equation} 
Combining \eqref{ENN2}, \eqref{EWW2}, \eqref{ENW2}, \eqref{ETW2} and \eqref{EFV2}, and using \eqref{NFT2} we get
\begin{align*}\lvert E_H(V,V)\rvert=&\lvert E_H(N,N)\rvert+\lvert E_H(N,T)\rvert+\lvert E_H(T,T)\rvert+\lvert E_H(N,W)\rvert+\lvert E_H(T,W)\rvert+\\
&+\lvert E_H(W,W)\rvert+\lvert E_H(F\cup U_L,V)\rvert\leq\\
\leq&\frac{1}{2}\cdot|N|\cdot\frac{|N|}{2}+|N|\cdot |T|+\binom{|T|}{2}+|N|+2|T|+\\
&+\frac{|W|}{2}+(3k+\ell-1)(|F|+\ell)-|E(L)|=\\
=&|F|^2+2|F|\cdot 3(k-1-|F|)+\binom{3(k-1-|F|)}{2}+2|F|+2\cdot 3(k-1-|F|)+\\
&+\frac{n-3(k-1)-\ell}{2}+(3k+\ell-1)(|F|+\ell)-|E(L)|=\\
=&\frac{n}{2}+\frac{1}{2}(3k+\ell-2)^2+\ell^2-|E(L)|-\frac{3}{8}-\frac{\left(|F|-(\ell-\frac{1}{2})\right)^2}{2}\leq\\
\leq&\frac{n}{2}+\frac{1}{2}(3k+\ell-2)^2+\ell^2-|E(L)|-\frac{1}{2}.\end{align*}
Therefore
$$(k+r)\left(n-\frac{k+r+1}{2}\right)+s+1\leq\frac{n}{2}+\frac{1}{2}(3k+\ell-2)^2+\ell^2-|E(L)|-\frac{1}{2}.$$
After some rearranging we get
$$(k+r-\frac{1}{2})\left(n-5k+4r-3\ell+3\right)\leq \frac{1}{2}(\ell-3r-\frac{1}{2})^2+\ell^2-|E(L)|-s-\frac{9}{8},$$
yielding a contradiction for $n>5k+\gamma_3$, if we take $\gamma_3$ such that
\begin{equation*}(k_1+1+r-\frac{1}{2})\left(\gamma_3+4r-3\ell+3\right)\geq\max\left\{0\,,\,\frac{1}{2}(\ell-3r-\frac{1}{2})^2+\ell^2-|E(L)|-s-\frac{9}{8}\right\}.\qedhere\end{equation*}
\end{proof}
\begin{rem}\label{rem:gamma3} As the proof above shows, $\gamma_3$ may be taken to be 
\begin{equation*}\max\left\{n_0-1-5k_1\,,\,\left\lfloor3\ell-4r-3+\frac{\frac{1}{2}(\ell-3r-\frac{1}{2})^2+\ell^2-|E(L)|-s-\frac{9}{8}}{k_1+r+\frac{1}{2}}\right\rfloor\right\}\end{equation*}
if $\frac{1}{2}(\ell-3r-\frac{1}{2})^2+\ell^2-|E(L)|-s-\frac{9}{8}\geq 0$, and $\max\left\{n_0-1-5k_1\,,\,\left\lceil 3\ell-4r-4\right\rceil\right\}$ otherwise.
\end{rem}
\medskip
We now show some consequences of Theorem \ref{thm:LtP3}.

\begin{cor}\label{cor:tP3}  
For any integers $k\geq 2$ and $n>5k+1$,
\begin{equation}\label{eq:kP3}AR(n,kP_3)=(k-1)\left(n-\frac{k}{2}\right)+2.\end{equation}
\end{cor}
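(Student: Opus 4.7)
My plan is to prove the equality by matching a lower bound from Lemma \ref{lem:q}(1) with an upper bound from Theorem \ref{thm:LtP3}, using a known value of $AR(n, 2P_3)$ from \cite{BGR} as the base case.

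For the lower bound, I would apply Lemma \ref{lem:q}(1) to $G = kP_3$ with parameters $r_1 = k-1$ and $s = 1$. The combinatorial input needed is the inequality $q_1(kP_3) > k - 1$. This follows from a simple double-counting: $kP_3$ has $2k$ edges and maximum degree $2$, so any set $R$ of at most $k - 1$ vertices is incident to at most $2(k-1) = 2k - 2$ of the edges, leaving at least $2$ edges inside $V \setminus R$. Consequently $q_1(kP_3) \geq k > k - 1$, and Lemma \ref{lem:q}(1) gives $AR(n, kP_3) > (k-1)\!\left(n - \frac{k}{2}\right) + 1$ for every $n \geq 3k$. Since the right-hand side is an integer (easy to check for both parities of $k$), this yields the desired lower bound $AR(n, kP_3) \geq (k-1)\!\left(n - \frac{k}{2}\right) + 2$.

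For the upper bound, I would invoke Theorem \ref{thm:LtP3} with $L$ the empty graph, $k_1 = 2$, $r = -1$, and $s = 1$. Writing the hypothesis out, the required base case becomes $AR(n, 2P_3) \leq n + 1 = (k_1 + r)\!\left(n - \frac{k_1 + r + 1}{2}\right) + s + 1$ for $n \geq n_0$, which is supplied by \cite{BGR} (where anti-Ramsey numbers of graphs with at most four edges are determined), for some $n_0$ in the range that keeps the relevant parameter small. Substituting $\ell = 0$, $|E(L)| = 0$, $r = -1$, $s = 1$, and $k_1 = 2$ into Remark \ref{rem:gamma3}, the key quantity $\frac{1}{2}\!\left(\ell - 3r - \frac{1}{2}\right)^2 + \ell^2 - |E(L)| - s - \frac{9}{8}$ equals $\frac{25}{8} - \frac{17}{8} = 1 \geq 0$, and the formula for $\gamma_3$ reduces to $\max\{n_0 - 11,\, \lfloor 1 + \tfrac{2}{3}\rfloor\} = \max\{n_0 - 11, 1\}$. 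Assuming the value from \cite{BGR} is valid for all $n \geq 12$, this gives $\gamma_3 = 1$, and Theorem \ref{thm:LtP3} delivers the matching upper bound for all $k \geq 2$ and $n > 5k + 1$.

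The only genuinely delicate step is the choice of base case. The most obvious option, $k_1 = 1$ with the trivial identity $AR(n, P_3) = 2$, runs through Remark \ref{rem:gamma3} to produce $\gamma_3 = 3$ and only yields the range $n > 5k + 3$; taking $k_1 = 2$ and leveraging the nontrivial value of $AR(n, 2P_3)$ from \cite{BGR} is exactly what sharpens $\gamma_3$ down to $1$ and recovers the threshold $n > 5k + 1$ stated in the corollary.
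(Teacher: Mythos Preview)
Your proposal is correct and follows essentially the same route as the paper: the lower bound via Lemma~\ref{lem:q}(1) with $G=kP_3$, $r_1=k-1$, $s=1$ (the paper simply asserts $q_1(kP_3)=k$, while you supply the easy double-counting), and the upper bound via Theorem~\ref{thm:LtP3} and Remark~\ref{rem:gamma3} with $L$ empty, $k_1=2$, $r=-1$, $s=1$, using $AR(n,2P_3)=n+1$ from \cite{BGR} (valid already for $n\geq 7$, so $n_0=7$ and $\gamma_3=\max\{-4,1\}=1$). Your closing observation about $k_1=1$ yielding only $\gamma_3=3$ is exactly the content of the paper's first remark following the corollary.
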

\begin{proof}
The lower bound follows, since clearly $q_1(kP_3)=k$, by taking $G=kP_3$, $r_1=k-1$ and $s=1$ in Lemma \ref{lem:q}. 
Since $AR(n,2P_3)=n+1$ for any $n\geq 7$ by \cite[Proposition 6.6]{BGR}, the upper bound follows by taking $L$ to be the empty graph, $k_1=2$, $r=-1$, $s=1$ and $n_0=7$ in Theorem \ref{thm:LtP3} and Remark \ref{rem:gamma3}.
\end{proof}

\begin{urems}
\begin{itemize}
\item
Using only the trivial observation that $AR(n,P_3)=2$ for any $n\geq 3$, instead of Proposition 6.5 in \cite{BGR}, we can get, by taking $L$ to be the empty graph, $k_1=1$, $r=-1$, $s=1$ and $n_0=3$ in Theorem \ref{thm:LtP3} and Remark \ref{rem:gamma3}, that \eqref{eq:kP3} holds for any integers $k\geq 1$ and $n>5k+3$.
\item
Note that \eqref{eq:kP3} does not hold for $3k\leq n<5k-3$. Indeed, colouring all the edges between $3k-2$ chosen vertices of $K_n$ by distinct colours, and all other edges of $K_n$ by one additional colour, we get that for any positive integers $n\geq 3k$,
\begin{equation*}\label{eq:tP_3small_n} AR(n,kP_3)\geq\frac{(3k-2)(3k-3)}{2}+2=(k-1)\left(\frac{9}{2}k-3\right)+2,\end{equation*}
which is larger than $(k-1)\left(n-\frac{k}{2}\right)+2$ if $n<5k-3$.
We suspect that for any integers $k\geq 1$, $n\geq 3k$,
\begin{equation*}\label{eq:tP3:lowerbound}AR(n,kP_3)=(k-1)\max\left\{n-\frac{k}{2}\,,\,\frac{9}{2}k-3\right\}+2=\begin{cases}(k-1)\left(n-\frac{k}{2}\right)+2&n\geq 5k-3\\(k-1)\left(\frac{9}{2}k-3\right)+2& n<5k-3~.\end{cases}\end{equation*}
\end{itemize}
\end{urems}

\begin{cor}\label{cor:PtkP3} 
For any integer $t\geq 3$ there is a constant $\gamma_3(P_{t+1})$ such that for any integers $k\geq 0$ and $n>5k+\gamma_3(P_{t+1})$,
$$AR(n,P_{t+1}\cup kP_3)=(k+\lfloor t/2\rfloor-1)\left(n-\frac{k+\lfloor t/2\rfloor}{2}\right)+2+t\bmod 2.$$
\end{cor}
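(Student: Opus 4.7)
The approach is to combine the upper bound of Theorem~\ref{thm:LtP3} with a matching lower bound from Lemma~\ref{lem:q}(1), exactly as in the proof of Corollary~\ref{cor:tP3}.

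For the upper bound I plan to apply Theorem~\ref{thm:LtP3} with $L=P_{t+1}$ and $k_1=0$, so the hypothesis the theorem needs is an upper bound on $AR(n,P_{t+1})$, which is supplied exactly by the Simonovits--S\'os formula \eqref{eq:P_k}: setting $r:=\lfloor t/2\rfloor-1$ and $s:=1+(t\bmod 2)$, and taking $n_0$ above the Simonovits--S\'os threshold for $P_{t+1}$, \eqref{eq:P_k} reads $AR(n,P_{t+1})\leq r\bigl(n-\tfrac{r+1}{2}\bigr)+s+1$ for every $n\geq n_0$. The theorem then produces a constant $\gamma_3(P_{t+1})$ such that, for all $k\geq 0$ and $n>5k+\gamma_3(P_{t+1})$,
\[
AR(n,P_{t+1}\cup kP_3)\leq\bigl(k+\lfloor t/2\rfloor-1\bigr)\!\left(n-\tfrac{k+\lfloor t/2\rfloor}{2}\right)+2+(t\bmod 2).
\]

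For the matching lower bound I plan to invoke Lemma~\ref{lem:q}(1) with $G:=P_{t+1}\cup kP_3$, $r_1:=k+\lfloor t/2\rfloor-1$, and the same $s=1+(t\bmod 2)$; the whole task then reduces to proving
\[
q_{1+(t\bmod 2)}\bigl(P_{t+1}\cup kP_3\bigr)\ \geq\ k+\lfloor t/2\rfloor.
\]
Since $q_j(G_1\cup G_2)=\min_{i+j'=j}\bigl\{q_i(G_1)+q_{j'}(G_2)\bigr\}$ for disjoint unions, I only need $q_j$ for $j\leq 2$ on the two factors. A direct inspection yields $q_0(P_3)=q_1(P_3)=1$ and $q_2(P_3)=0$, whence $q_0(kP_3)=q_1(kP_3)=k$ and $q_2(kP_3)=k-1$; while a simple vertex-removal count on the path gives $q_0(P_{t+1})=q_1(P_{t+1})=\lfloor t/2\rfloor$ when $t$ is even, and $q_0(P_{t+1})=\lfloor t/2\rfloor+1$, $q_1(P_{t+1})=q_2(P_{t+1})=\lfloor t/2\rfloor$ when $t$ is odd. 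Substituting these into the convolution minimum separately for $t$ even (so $s=1$) and $t$ odd (so $s=2$), every term evaluates to exactly $k+\lfloor t/2\rfloor$, which delivers the strict inequality.

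The only place that needs care is the $q_j(P_{t+1})$ computation for odd $t$, where one must rule out that the extra budget $j=2$ buys an extra vertex on the $P_{t+1}$-side beyond a maximum independent set augmented by one leaf; this follows from the observation that removing at most $\lfloor t/2\rfloor-1$ vertices from $P_{t+1}$ always leaves at least three induced edges, so $q_2(P_{t+1})$ cannot drop below $\lfloor t/2\rfloor$. With this in hand, the matching lower and upper bounds combine to give the claimed equality.
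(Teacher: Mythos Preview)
Your proposal is correct and follows essentially the same route as the paper: the upper bound via Theorem~\ref{thm:LtP3} with $L=P_{t+1}$, $k_1=0$, $r=\lfloor t/2\rfloor-1$, $s=1+(t\bmod 2)$, and the lower bound via Lemma~\ref{lem:q}(1) with $r_1=k+\lfloor t/2\rfloor-1$ and the same $s$. The paper simply asserts $q_1(P_{t+1}\cup kP_3)=\lceil t/2\rceil+k$ and $q_2(P_{t+1}\cup kP_3)=\lceil t/2\rceil+k-1$ without derivation, whereas you obtain the needed inequality by the disjoint-union convolution and an explicit degree-count on $P_{t+1}$; this is more detailed but not a different method.
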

\begin{proof}
The lower bound follows, since $q_1(P_{t+1}\cup kP_3)=\lceil t/2\rceil+k$ and $q_2(P_{t+1}\cup kP_3)=\lceil t/2\rceil+k-1$, by taking $G=P_{t+1}\cup kP_3$, $r_1=\lfloor t/2\rfloor+k-1$ and $s=1+t\bmod 2$ in Lemma \ref{lem:q}.
The upper bound follows from \eqref{eq:P_k} by taking $L=P_{t+1}$, $k_1=0$, $r=\lfloor t/2\rfloor-1$ and $s=1+t\bmod 2$ in Theorem \ref{thm:LtP3}.
\end{proof}

\begin{cor}\label{cor:P2kP3} 
For any integers $k\geq 1$ and $n>5k+27$,
$$AR(n,P_2\cup kP_3) =(k-1)\left(n-\frac{k}{2}\right)+3.$$
\end{cor}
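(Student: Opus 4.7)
The plan closely mirrors the proofs of Corollaries \ref{cor:tP3} and \ref{cor:PtkP3}: combine the lower bound from Lemma \ref{lem:q}(1) with the upper bound from Theorem \ref{thm:LtP3}, starting from a base case $AR(n,P_2\cup P_3)\leq 3$ drawn from \cite{BGR}.

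For the lower bound, I would apply Lemma \ref{lem:q}(1) to $G=P_2\cup kP_3$ with $r_1=k-1$ and $s=2$. The graph $P_2\cup kP_3$ has $2k+1$ edges and maximum degree $2$, so the deletion of any $k-1$ vertices covers at most $2(k-1)$ edges and leaves at least $3>2$ edges uncovered. Hence $q_2(P_2\cup kP_3)\geq k>k-1$, and the lemma yields $AR(n,P_2\cup kP_3)>(k-1)\bigl(n-k/2\bigr)+2$ for $n\geq 3k+2$, i.e., $AR(n,P_2\cup kP_3)\geq (k-1)(n-k/2)+3$.

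For the matching upper bound, I would invoke Theorem \ref{thm:LtP3} with $L=P_2$, $k_1=1$, $r=-1$ and $s=2$. The hypothesis becomes $AR(n,P_2\cup P_3)\leq 3$ for all $n\geq n_0$, where $n_0$ is a small constant (trivially $AR(n,P_2\cup P_3)\geq 3$ since three distinct colours are needed for any rainbow copy of a three-edge graph, and equality for large enough $n$ is part of the catalogue in \cite{BGR}). The theorem then delivers $AR(n,P_2\cup kP_3)\leq (k-1)(n-k/2)+3$ for all $n>5k+\gamma_3$. Substituting $\ell=2$, $|E(L)|=1$, $r=-1$, $s=2$ and $k_1=1$ into Remark \ref{rem:gamma3}, the numerator $\tfrac12(\ell-3r-\tfrac12)^2+\ell^2-|E(L)|-s-\tfrac98$ evaluates to $10$; dividing by $k_1+r+\tfrac12=\tfrac12$ gives $20$; and adding $3\ell-4r-3=7$ produces $\gamma_3=27$, precisely as claimed in the corollary.

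The only part that is not a direct substitution into previously-proved results is the base case $AR(n,P_2\cup P_3)\leq 3$. Should this fact not appear explicitly in \cite{BGR}, a short self-contained argument suffices: in any edge-colouring of $K_n$ (for $n\geq 5$) using at least three distinct colours, a brief case analysis on the sizes of the colour classes produces a $P_3$ using two of the colours together with a disjoint edge of the third colour. Coupling this with the lower and upper bounds above establishes the equality for all $n>5k+27$.
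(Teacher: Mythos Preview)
Your proof is correct and follows exactly the same route as the paper: the lower bound via Lemma~\ref{lem:q}(1) with $G=P_2\cup kP_3$, $r_1=k-1$, $s=2$ (using $q_2(P_2\cup kP_3)\geq k$), and the upper bound via Theorem~\ref{thm:LtP3} with $L=P_2$, $k_1=1$, $r=-1$, $s=2$, citing $AR(n,P_2\cup P_3)=3$ from \cite{BGR}. Your computation of $\gamma_3=27$ from Remark~\ref{rem:gamma3} is accurate (the paper uses $n_0=5$, so the first branch $n_0-1-5k_1=-1$ is not binding), and your fallback self-contained argument for the base case is a nice addition, though the paper simply cites \cite[Proposition~3.3]{BGR}.
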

\begin{proof}
The lower bound follows, since $q_2(P_2\cup kP_3)=k$, by taking $G=P_2\cup kP_3$, $r_1=k-1$ and $s=2$ in Lemma \ref{lem:q}. The upper bound follows by taking $L=P_2$, $k_1=1$, $r=-1$, $s=2$ and $n_0=5$ in Theorem \ref{thm:LtP3} and Remark \ref{rem:gamma3},
since $AR(n,P_2\cup P_3)=3$ for any $n\geq 5$ (\cite[Proposition 3.3]{BGR}).
\end{proof}

\begin{cor}\label{cor:tP2kP3}
For any integers $t\geq 2$, $k\geq 2$ and $n>\min\{5k+\frac{13}{2}t+8\,,\,\frac{5}{2}t+\frac{19}{2}k+7\}$,
\begin{equation}\label{kP2tP3}AR(n,tP_2\cup kP_3) =(k+t-2)\left(n-\frac{k+t-1}{2}\right)+2.\end{equation}
\end{cor}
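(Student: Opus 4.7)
The plan is to prove the equality by separately establishing matching upper and lower bounds.

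For the lower bound, I would apply part 1 of Lemma \ref{lem:q} with $G = tP_2 \cup kP_3$, $r_1 = t + k - 2$, and $s = 1$. The key input is $q_1(tP_2 \cup kP_3) = t + k - 1$: since the only degree-$2$ vertices of $G$ are the $k$ middle vertices of the $P_3$'s, any set of $r$ vertices covers at most $\min(2r, r + k)$ of the $t + 2k$ edges (the bound $r + k$ being optimally achieved by including all $k$ middles first), so leaving at most one edge uncovered forces $r \geq t + k - 1$. This value is realised by taking all $k$ middle vertices together with one endpoint from $t-1$ of the $P_2$'s. The lemma then yields $AR(n, tP_2 \cup kP_3) \geq (t + k - 2)(n - (t + k - 1)/2) + 2$.

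For the upper bound I would give two independent derivations and take the minimum of their ranges of validity. Derivation (A) applies Theorem \ref{thm:LtP3} directly with $L = tP_2$, $k_1 = 0$, $r = t - 2$, $s = 1$, and $n_0$ as dictated by Corollary \ref{cor:tP2} (note that the base hypothesis $AR(n, tP_2) \leq (t-2)(n - (t-1)/2) + 2$ is exactly that corollary). Substituting $\ell = 2t$ and $|E(L)| = t$ into the expression of Remark \ref{rem:gamma3} and simplifying, one verifies that $\gamma_3$ can be taken no larger than $\frac{13t}{2} + 8$, giving the upper bound for $n > 5k + \frac{13t}{2} + 8$.

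Derivation (B) is a two-step bootstrap that produces a constant linear in $k$ instead. First, apply Theorem \ref{thm:LtP3} with the fixed graph $L = 2P_2$, $k_1 = 0$, $r = 0$, $s = 1$, $n_0 = 5$, using $AR(n, 2P_2) = 2$ for $n \geq 5$ from \cite[Lemma 3.1]{BGR}; this produces an absolute constant $c$ and the intermediate bound $AR(n, 2P_2 \cup kP_3) \leq k(n - (k+1)/2) + 2$ for $n > 5k + c$. Second, for each fixed $k \geq 2$, this intermediate bound is exactly the base hypothesis of Theorem \ref{thm:tP2} applied with $L = kP_3$, $t_1 = 2$, $r = k - 2$, $s = 1$, and $n_0 = 5k + c + 1$; substituting $\ell = 3k$ and $|E(L)| = 2k$ into Remark \ref{rem:gamma} and simplifying, one verifies that $\gamma_2$ can be taken no larger than $\frac{19k}{2} + 7$, giving the upper bound for $n > \frac{5t}{2} + \frac{19k}{2} + 7$. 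Combining (A) and (B) then proves the upper bound for $n$ exceeding the claimed minimum. The main obstacle is not the structure of the argument, which is a routine two-fold application of the transfer theorems of Sections \ref{sec:tP2} and \ref{sec:tP3}, but the arithmetic verification that the rational expressions produced by Remarks \ref{rem:gamma} and \ref{rem:gamma3} can indeed be absorbed into the linear bounds $\frac{19k}{2} + 7$ and $\frac{13t}{2} + 8$ uniformly for all $t, k \geq 2$.
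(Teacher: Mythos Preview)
Your overall architecture matches the paper's: lower bound from Lemma~\ref{lem:q}, and two independent upper-bound derivations, one via Theorem~\ref{thm:LtP3} with $L=tP_2$ and one via Theorem~\ref{thm:tP2} with $L=kP_3$ after an intermediate step. The lower bound is fine (the paper uses part~2 of Lemma~\ref{lem:q} with $L=kP_3$, $t_2=1$, but your direct application of part~1 is equivalent).

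However, the arithmetic does \emph{not} work with the parameter choices you specify, and this is exactly where the paper differs. In Derivation~(A) you take $k_1=0$ (base case $AR(n,tP_2)$ from Corollary~\ref{cor:tP2}); the paper takes $k_1=1$, using $AR(n,P_3\cup tP_2)$ from Corollary~\ref{cor:PktP2} as the base. This matters because the denominator in Remark~\ref{rem:gamma3} is $k_1+r+\tfrac12$, which is $t-\tfrac32$ for you and $t-\tfrac12$ for the paper. Carrying out your computation with $\ell=2t$, $|E(L)|=t$, $r=t-2$, $s=1$, $k_1=0$ gives
\[
\gamma_3=\Bigl\lfloor \tfrac{13t}{2}+\tfrac{21}{4}+\tfrac{107}{4(2t-3)}\Bigr\rfloor,
\]
which for $t=2$ equals $45$, not $21=\tfrac{13\cdot 2}{2}+8$; it also exceeds $\tfrac{13t}{2}+8$ for $t=3,4$. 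With the paper's $k_1=1$ the analogous expression is $\lfloor\tfrac{13t}{2}+\tfrac34+\tfrac{87}{4(2t-1)}\rfloor$, which is exactly $21$ at $t=2$ and stays below $\tfrac{13t}{2}+8$ thereafter.

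The same issue contaminates Derivation~(B): your first step (Theorem~\ref{thm:LtP3} with $L=2P_2$, $k_1=0$, $r=0$) yields $\gamma_3=45$, hence $n_0=5k+46$ for the second step. Then in Remark~\ref{rem:gamma} the term $n_0-1-\tfrac52 t_1=5k+40$ dominates, and $5k+40>\tfrac{19k}{2}+7$ for all $k\le 7$. The paper avoids this by obtaining the intermediate bound $AR(n,2P_2\cup kP_3)\le k(n-\tfrac{k+1}{2})+2$ from Derivation~(A) itself (specialised to $t=2$), which gives $n_0=5k+22$ and keeps $\gamma_2\le\tfrac{19k}{2}+7$.

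So the strategy is right, but to meet the stated constants you must start Theorem~\ref{thm:LtP3} from $k_1=1$ (using Corollary~\ref{cor:PktP2}) rather than $k_1=0$.
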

\begin{proof}
Since $q_1(P_2\cup kP_3)=k$ and $q_0(P_2\cup kP_3)=k+1$, we get, by taking $L=kP_3$, $t_2=1$, $r_2=k-2$ and $s=1$ in Lemma \ref{lem:q}, that the lower bound in \eqref{kP2tP3} holds for any $t\geq 1$, $k\geq 1$ and $n\geq 2t+3k$. 

Since $AR(n,P_3\cup tP_2) =(t-1)\left(n-\frac{t}{2}\right)+2$ for any $t\geq 2$ and $n>\frac{5}{2}t+12$, by Corollary \ref{cor:PktP2}, we get, by taking $L=tP_2$, $k_1=1$, $r=t-2$, $s=1$ and $n_0=\lfloor\frac{5}{2}t+13\rfloor$ in Theorem \ref{thm:LtP3} and Remark \ref{rem:gamma3}, that the upper bound in \eqref{kP2tP3} holds for any $t\geq 2$, $k\geq 1$ and $n>5k+\frac{13}{2}t+8$.

In particular, $AR(n,2P_2\cup kP_3) \leq k\left(n-\frac{k+1}{2}\right)+2$ for any $k\geq 1$ and $n>5k+21$, so by taking $L=kP_3$, $t_1=2$, $r=k-2$, $s=1$ and $n_0=5k+22$ in Theorem \ref{thm:tP2} and Remark \ref{rem:gamma}, we get that the upper bound in \eqref{kP2tP3} holds for any $k\geq 1$, $t\geq 2$ and $n>\frac{5}{2}t+\max\{5k+16\,,\,\frac{19}{2}k+3\}$ (hence the upper bound in \eqref{kP2tP3} holds for any $t\geq 2$, $k\geq 2$ and $n>\frac{5}{2}t+\frac{19}{2}k+7$).
\end{proof}

\end{document}